\documentclass[a4paper,10pt,reqno]{amsart}
\usepackage{fullpage}
\usepackage[utf8]{inputenc}
\usepackage{hyperref}
\usepackage{amsmath}
\usepackage{amsfonts}
\usepackage{amssymb}
\usepackage{amsthm}
\usepackage{latexsym}
\usepackage{xcolor}
\usepackage{tikz}
\usepackage{mathtools}
\usepackage{faktor}

\providecommand\given{}
\newcommand\SetSymbol[1][]{%
	\nonscript\:#1 :
	\allowbreak
	\nonscript\:
	\mathopen{}}
\DeclarePairedDelimiterX\Set[1]\{\}{%
	\renewcommand\given{\SetSymbol
	}
	#1
}

\DeclarePairedDelimiter{\abs}{\lvert}{\rvert}
\DeclarePairedDelimiterXPP{\norm}[2]{}{\lVert}{\rVert}{_{#2}}{#1}
\DeclarePairedDelimiterXPP{\Altnorm}[3]{}{\lVert}{\rVert}{_{#2}^{#3}}{#1}
\DeclarePairedDelimiterXPP{\fNorm}[2]{}{\lvert}{\rvert}{_{#2}}{#1}

\definecolor{darkgreen}{rgb}{0.0, 0.2, 0.13}
\definecolor{darkolivegreen}{rgb}{0.33, 0.42, 0.18}
\definecolor{chamoisee}{rgb}{0.63, 0.47, 0.35}
\definecolor{cerulean}{rgb}{0.0, 0.48, 0.65}
\definecolor{coolgrey}{rgb}{0.55, 0.57, 0.67}

\DeclareMathOperator{\WF}{WF}

\DeclareMathOperator{\singsupp}{sing\, supp}

\DeclareMathOperator{\Id}{Id}

\newcommand{\fa}{\;\,\forall\,}
\newcommand{\ex}{\;\,\exists\,}

\newcommand{\N}{\mathbb{N}}
\newcommand{\R}{\mathbb{R}}

\newcommand{\E}{\mathcal{E}}
\newcommand{\D}{\mathcal{D}}
\newcommand{\CC}{\mathcal{C}}
\newcommand{\An}{\mathcal{A}}
\newcommand{\G}{\mathcal{G}}

\newcommand{\F}{\mathcal{F}}
\newcommand{\bG}{\mathbf{G}}
\newcommand{\bM}{\mathbf{M}}

\newcommand{\bN}{\mathbf{N}}
\newcommand{\fN}{\mathfrak{N}}

\newcommand{\fM}{\mathfrak{M}}
\newcommand{\bW}{\mathbf{W}}

\newcommand{\alp}{{\lvert\alpha\rvert}}
\newcommand{\bet}{{\lvert\beta\rvert}}
\newcommand{\xit}{{\lvert\xi\rvert}}

\newcommand{\eps}{\varepsilon}

\newcommand{\Beu}[2]{\mathcal{E}^{( #1 )} ( #2 )}
\newcommand{\Rou}[2]{\mathcal{E}^{\{ #1 \}} ( #2 )}
\newcommand{\DC}[2]{\mathcal{E}^{[ #1 ]} ( #2 )}

\newcommand{\vBeu}[3][P]{\mathcal{E}^{ (#2) } ( #3 ; #1 )}
\newcommand{\vRou}[3][P]{\mathcal{E}^{\{ #2 \}} \left( #3; #1 \right)}
\newcommand{\vDC}[3][P]{\mathcal{E}^{[ #2 ]} ( #3 ; #1  )}

\theoremstyle{plain}
\newtheorem{Thm}{Theorem}[section]

\newtheorem{Lem}[Thm]{Lemma}

\theoremstyle{definition}
\newtheorem{Def}[Thm]{Definition}

\theoremstyle{remark}
\newtheorem{Rem}[Thm]{Remark}

\numberwithin{equation}{section}

\title[Regularity of some operators]{Ultradifferentiable regularity properties\\ of a class of hypoelliptic operators}
\author{Stefan F\"urd\"os}
\address{Institute of Mathematics\\ University of Vienna\\
Oskar-Morgenstern-Platz 1\\
1090 Wien, Austria}
\email{stefan.fuerdoes@univie.ac.at}

\subjclass[2020]{Primary 35B65; Secondary 35H10, 35A17, 26E10}
\keywords{Ultradifferentiable hypoellipticity, Theorem of Iterates}

\begin{document}
	\begin{abstract}
		We investigate the ultradifferentiable regularity of solutions and vectors of
		a class of hypoelliptic differential operators first introduced by H\"ormander.
		Our results extend results of Hashimoto--Morimoto--Matsuzawa and Bolley--Camus--Metivier. 
	\end{abstract}
	\maketitle
	
\section{Introduction}
The question of regularity of vectors of differential operators,
commonly referred to as the problem of iterates, has recently experienced a surge in interest, see e.g.  \cite{MR4509019}, \cite{MR4707824}, \cite{MR4729912}, \cite{MR5014821}, \cite{MR4783651} or \cite{MR4632849}.
The survey \cite{MR1037999}
gives an overview of the older literature and the connection of the problem of iterates with
the problem of hypoellipticity.

We recall that a smooth function $f\in \E(\Omega)$ defined in an open set $\Omega\subseteq\R^n$
is in the Gevrey class $\G^s(\Omega)$ of order $s\geq 1$ (The case $s=1$ corresponds to the class
of real-analytic functions) if for all compact sets $K\subseteq\Omega$ there are constants $C,h>0$
such that
\begin{equation*}
	\sup_{x\in K}\abs*{D^\alpha f(x)}\leq Ch^{\alp}(k!)^s
\end{equation*} 
for all $\alpha\in\N_0^n$.
If $P$ is a differential operator with real-analytic coefficients on $\Omega$ then
a distribution $u\in \D^\prime(\Omega)$ is an $s$-Gevrey vector, $s\geq 1$, if
$P^k u\in L^2_{loc}(\Omega)$ for all $k\in\N_0$ and for all compact $K\subseteq\Omega$ there are
constants $C,h>0$ such that 
\begin{equation*}
	\norm*{P^k u}{L^2(K)}\leq Ch^k ((dk)!)^s
\end{equation*}
for all $k\in\N_0$.
The space of $s$-Gevrey vectors of $P$ is denoted by $\G^s(\Omega;P)$.

If $P$ is an elliptic differential operator $P$ with real-analytic coefficients on an open set $\Omega\subseteq\R^n$ then $\G^s(\Omega;P)=\G^s(\Omega)$, cf.~\cite{MR557524} (for $s=1$
see also \cite{MR152909} and \cite{MR149329}).
According to \cite{MR504629} the converse is also true for $s>1$.
On the other hand it was shown in \cite{MR654409} for hypoelliptic differential operators $P$
of principal type that $\G^1(\Omega;P)=\G^1(\Omega)$ and  there is a loss
of regularity for $u\in\G^s(\Omega,P)$.
Similar results were proven by Derridj, see \cite{MR4707824}, for
 sum of squares of real vector fields of finite type.

This brief note can be viewed as a continuation of the recent paper \cite{MR4432278} of Schindl and the author, where a novel approach to the problem of iterates was introduced. In particular, we proved,
\cite[Theorem 1.1]{MR4432278}, that
for hypoellipitc operators of principal type
an ultradifferentiable Theorem of Iterates hold with respect to
certain ultradifferentiable classes given by weight functions in
the sense of Braun--Meise--Taylor \cite{MR1052587}.
We focus our study in this paper on a class of hypoelliptic differential operators which were originally introduced by H\"ormander, for a definition see the next section.
In \cite{MR632764} Bolley--Camus investigated the Gevrey hypoellipticy and the regularity of Gevrey
vectors of these operators.
Their results on the Gevrey hypoellipticity were improved by Hashimoto--Morimoto--Matsuzawa \cite{MR711439}, whereas Bolley--Camus--Metivier \cite{MR0745974} sharpened the results on the regularity of Gevrey vectors.

We will analyze the hypoellipticity and regularity of vectors
of operators in this class in the general ultradifferentiable setting given by weight matrices \cite{MR3285413} which generalize the setting
of weight sequences, cf.~e.g.~\cite{MR320743}, and the classes given by
weight functions, see cf.~\cite{MR1052587}.
In particular, we will prove the Theorem of Iterates for this operators
for the same family of Braun--Meise--Taylor classes which have been considered in \cite[Theorem 1.1]{MR4432278}.
Note that by \cite[Remark 2.8]{MR5014821} an analogous statement cannot hold for Denjoy-Carleman classes.

\subsection*{Acknowledgments}
This work was funded in whole or in part by the Austrian Science Fund (FWF) 10.55776/PAT1994924.
 
\section{Statement of main results}
We focus in this article on a class of
linear partial differential operators $P(x,D)$ with real-analytic coefficients  in some 
open set $\Omega$, whose symbols $p(x,\xi)$ satisfy the following conditions:
\begin{enumerate}
	\item There are constants $\rho$ and $\delta$ such that $0\leq \delta <\rho\leq 1$
	and for all compact $K\subset \Omega$ there are constants $C,A>0$ and $M>0$ such that 
	\begin{equation*}
		\abs*{\partial_x^\alpha\partial_\xi^\beta p(x,\xi)}\leq CA^{\alp+\bet}\alp !\bet! \abs*{p(x,\xi)}\xit^{-\rho\alp+\delta\bet}
	\end{equation*}
for $x\in K$, $\xi\in\R^n$ with $\xit\geq M$, $\alpha,\beta\in\N_0^n$.
\item There exists some $d^\prime>0$ and for all compact subsets $\Omega\subseteq K$ 
there are constants $C>0$ and $L>0$ such that 
\begin{equation*}
	\xit^{d^\prime}\leq C\abs*{p(x,\xi)}
\end{equation*} 
for $x\in K$, $\xi\in\R^n$ with $\xit\geq L$.
\end{enumerate}
We denote the space of operators satisfying conditions (1) and (2) by $\An^{d,d^\prime}_{\rho,\delta}(\Omega)$.

As noted above we want to discuss the ultradifferentiable hypoellipticity of operators
in $\An^{d,d^\prime}_{\rho,\delta}(\Omega)$.
In the literature there are mainly two closely related families of ultradifferentiable classes:
first, Denjoy-Carleman classes with weight sequences as defining data, and Braun-Meise-Taylor classes given by weight functions.
\begin{Def}
	A weight sequence is a sequence $\bM=(M_k)_k$ of positive real numbers such that $M_0=1$
	and
	\begin{equation*}
		M_k^2\leq M_{k-1}M_{k+1},\qquad k\in\N.
	\end{equation*}
\end{Def}
A function $f\in\E(\Omega)$ is ultradifferentiable of class $\{\bM\}$, respective of class $(\bM)$,
if for all compact sets $K\subseteq\Omega$ there are constants $C,h>0$, respective for all $h>0$ there exists $C>0$, 
such that
\begin{equation*}
	\sup_{x\in K}\abs*{D^\alpha f(x)}\leq Ch^{\alp}M_{\alp},\qquad \alpha\in\N_0^n.
\end{equation*}
The spaces of functions of class $\{\bM\}$ and $(\bM)$  are denoted by $\Rou{\bM}{\Omega}$
and $\Beu{\bM}{\Omega}$, respectively.
\begin{Def}
	A weight function is a continuous, increasing function $\omega:[0,\infty)\rightarrow[0,\infty)$
	such that $\omega_{[0,1]}\equiv 0$ and the following conditions are satisfied:
	\begin{gather*}
	\omega(2t)=O(\omega(t))\qquad t\longrightarrow\infty,\\
    \log t=o(\omega(t))\qquad t\rightarrow\infty,\\
    \varphi_\omega(t):=\omega(e^t)\text{ is convex.}
	\end{gather*}
\end{Def}
If we consider also the conjugate function of $\varphi_\omega$ given by $\varphi^\ast(t)=\sup_{s\geq 0}(st-\varphi(s))$ then a smooth function
$f\in\E(\Omega)$ is of class $\{\omega\}$ if for all compact sets $K\subseteq\Omega$
there are constants $C,h>0$ such that 
\begin{equation}\label{BMTDefEst}
\sup_{x\in K}    \abs{D^\alpha f(x)}\leq C e^{h^{-1}\varphi^\ast_\omega(h\alp)}\qquad \fa\alpha\in\N_0^n.
\end{equation}
On the other hand $f$ is of class $(\omega)$ if for all compact $K\subseteq\Omega$
and every $h>0$ there exists $C>0$ such that the estimate \eqref{BMTDefEst} is satisfied.

In the following we will use the notation
$[\ast]=\{\ast\},(\ast)$ where $\ast=\omega,\bM$.
If $P$ is a differential operator then we say that $P$ is $[\ast]$-hypoelliptic if
$\singsupp_{[\ast]}Pu=\singsupp_{[\ast]} u$ for every $u\in\D^\prime(\Omega)$.
Here $\singsupp_{[\ast]}u$ is the singular support of $u$ with respect to the
class $\DC{\ast}{\Omega}$, which is defined in the obvious way.
\begin{Thm}\label{HypoMainThm}
	Let $P\in\An^{d,d^\prime}_{\rho,\delta}(\Omega)$ and $\theta=(\rho-\delta)^{-1}$.
	Then the following statements hold:
\begin{enumerate}
	\item $P$ is $\{\bM\}$-hypoelliptic for any semiregular weight sequence $\bM$
	such that $\bG^{\theta}\preceq \bM$.
	\item $P$ is $(\bM)$-hypoelliptic for any semiregular weight sequence $\bM$ with
	$\bG^\theta\lhd \bM$.
	\item $P$ is $\{\omega\}$-hypoelliptic for every weight function
	$\omega$ with $\omega(t)=O(t^{\rho-\delta})$, $t\rightarrow\infty$.
	\item $P$ is $(\omega)$-hypoelliptic for every weight function $\omega$ such that
	$\omega(t)=o(t^{\rho-\delta})$, $t\rightarrow\infty$. 
\end{enumerate}
\end{Thm}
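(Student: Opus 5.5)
We reduce everything to one Gevrey-type mechanism: a parametrix for $P$ in a class of analytic pseudodifferential operators of type $(\rho,\delta)$, combined with a microlocal characterization of $\singsupp_{[\ast]}$ through decay estimates on localized Fourier transforms.

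\textbf{Step 1: the parametrix.} Condition (1) says the symbol is of analytic type $(\rho,\delta)$, with derivatives measured relative to $\abs{p}$. Condition (2) gives invertibility for $\xit\geq L$. So we build a formal analytic symbol $q\sim\sum_j q_j$ with $q_0=\chi(\xi)/p(x,\xi)$ in the spirit of Hashimoto--Morimoto--Matsuzawa \cite{MR711439}. Here $\chi$ cuts off $\xit\le L$, and each $q_j$ is defined recursively by the composition formula. By induction, using the Leibniz rule and the Faà di Bruno formula for $1/p$, we show
\begin{equation*}
\abs*{\partial_x^\alpha\partial_\xi^\beta q_j(x,\xi)}\leq C A^{j+\alp+\bet}\, j!\,\alp!\,\bet!\,\abs{p(x,\xi)}^{-1}\xit^{-(\rho-\delta)j-\rho\alp+\delta\bet}.
\end{equation*}
Summing the series with Gevrey-type cutoffs $\psi_j(\xi)$ adapted to the loss $\xit^{-(\rho-\delta)j}$ produces a properly supported $Q$ with $QP=\Id+R$. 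The remainder $R$ has a kernel that is $\G^\theta$ off the diagonal. Microlocally near any point it is $\G^\theta$-smoothing, since its symbol is $O(e^{-c\xit^{1/\theta}})$ because $1/\theta=\rho-\delta$.

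\textbf{Step 2: localization and Fourier decay.} Suppose $x_0\notin\singsupp_{[\ast]}Pu$. Take cutoff sequences $\chi_N\in\D$ equal to $1$ near $x_0$ with Ehrenpreis-type bounds $\abs{D^\alpha\chi_N}\le C^{\alp+1}N^{\alp}$ for $\alp\leq N$; one can take $\chi_N$ of class $\G^\theta\subseteq\DC{\ast}{}$ after adapting. Then write
\begin{equation*}
\chi_N u=\chi_N Q(Pu)-\chi_N Ru.
\end{equation*}
By \cite{MR3285413}-type characterizations, membership in $\DC{\bM}{}$ near $x_0$ amounts to $\abs{\widehat{\chi_N v}(\xi)}\leq C h^{N}M_N\xit^{-N}$; the weight-function classes have the analogue with $e^{-\omega(\xi)/h}$. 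For the term $\chi_N Q(Pu)$ we use the off-diagonal $\G^\theta$ regularity of $Q$'s kernel together with the symbol estimates, and we split $Pu$ into a $[\ast]$-regular piece near $x_0$ and a piece supported away from $x_0$. The pseudolocal part then costs only Gevrey-$\theta$ factors $(N!)^\theta$. The hypotheses $\bG^\theta\preceq\bM$ (resp.\ $\lhd$), and $\omega(t)=O(t^{\rho-\delta})$ (resp.\ $o$), which is exactly $\G^\theta\subseteq\Rou{\omega}{}$ (resp.\ $\subseteq\Beu{\omega}{}$), absorb these factors. The $\delta$-loss in $\xi$-derivatives is absorbed the same way, since differentiating $q$ in $x$ gains $\xit^{-\rho}$ while we need to beat $\xit^{\delta}$ losses. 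Semiregularity of $\bM$ (derivation closedness, $M_k^{1/k}\to\infty$) guarantees that applying an analytic-coefficient operator of order $\le 0$ preserves the class and lets us shift indices $N\mapsto N+d$. The term $Ru$ is $\G^\theta$ near $x_0$, hence of class $[\ast]$.

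\textbf{Step 3: the four cases.} Cases (1) and (2) follow from Step 2 directly. For (3) and (4) we use that the Braun--Meise--Taylor class equals the union (resp.\ intersection) of the Denjoy--Carleman classes of the associated weight matrix $\{W^\lambda\}$ \cite{MR3285413}. Each $W^\lambda$ is semiregular, and the growth assumption on $\omega$ translates into $\bG^\theta\preceq W^\lambda$ for some $\lambda$, or $\bG^\theta\lhd W^\lambda$ for all $\lambda$. This reduces (3) and (4) to (1) and (2). One needs to check that the constants in Step 2 depend only on the comparison constants, so that the union/intersection passes through.

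The main obstacle is Step 1, namely the uniform analytic-type estimates for the $q_j$. The relative bounds in condition (1), taken with respect to $\abs p$ rather than a power of $\xit$, must be propagated through the recursion with $j!$-growth and not worse. We would mimic the Boutet de Monvel--Krée formal analytic symbol calculus with the weight $\abs p$ carried as a factor, and prove the induction via generating-function (majorant series) estimates.
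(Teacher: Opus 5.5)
Your overall architecture (analytic-type parametrix with a $\G^\theta$-regularizing remainder, $[\ast]$-pseudolocality of the parametrix, then passage from weight functions to sequences) matches the paper. However, the reduction in Step 3 fails as stated.

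You assert that each $\bW^\lambda$, $W^\lambda_k=e^{\varphi^\ast_\omega(\lambda k)/\lambda}$, is semiregular. It is log-convex, but derivation closedness $W^\lambda_{k+1}\le Q^{k+1}W^\lambda_k$ amounts to $(\varphi^\ast_\omega)'(t)=O(t)$, i.e.\ essentially $\omega(t)\geq c(\log t)^2$. This is not implied by the hypotheses.

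For example, take $\omega(t)=(\log t)^{3/2}$ for $t\geq 1$ and $\omega=0$ on $[0,1]$. This is a weight function with $\omega(t)=o(t^{\rho-\delta})$, so it is covered by (3) and (4). Here $\varphi^\ast_\omega(t)=\tfrac{4}{27}t^3$ and $W^\lambda_k=\exp(\tfrac{4}{27}\lambda^2k^3)$. The ratio $W^\lambda_{k+1}/W^\lambda_k=\exp\bigl(\tfrac{4}{27}\lambda^2(3k^2+3k+1)\bigr)$ is not bounded by $Q^{k+1}$ for any $Q$.

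For such a single sequence, two tools you rely on are unavailable: the Fourier characterization (Theorem \ref{FourierCharThm} with $\fM=\{\bM\}$) and your index shifts $N\mapsto N+d$. Indeed $\Rou{\bW^\lambda}{\Omega}$ is not even stable under differentiation, so (1) and (2) cannot be applied to $\bW^\lambda$.

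What $\mathfrak{W}$ does satisfy is only the mixed inequality $W^\lambda_{j+k}\le W^{2\lambda}_jW^{2\lambda}_k$, i.e.\ R- and B-semiregularity as a matrix. This is exactly why the paper proves hypoellipticity for R-/B-semiregular weight matrices (Theorem \ref{MatrixHypo}). It then obtains (3) and (4) from $\DC{\omega}{\Omega}=\DC{\mathfrak{W}}{\Omega}$ together with $\bG^\theta\{\preceq\}\mathfrak{W}$ resp.\ $\bG^\theta\{\lhd)\mathfrak{W}$ (Remark \ref{BMTRemark1}).

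To repair your route, run Step 2 at the matrix level. From $Pu$ of class $\{\bW^\lambda\}$ near $x_0$, conclude that $u$ is of class $\{\bW^{\lambda'}\}$ for some larger $\lambda'$, absorbing each fixed index shift by passing from $\lambda$ to $2\lambda$. In the Beurling case, for each target $\lambda'$ start from $Pu$ of class $(\bW^{\lambda})$ near $x_0$ with $\lambda$ small enough.

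Two further points on Steps 1--2.

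First, Step 1 is not where the difficulty lies. The paper takes the parametrix from \cite[Theorem 3.1]{MR711439}, which already gives a remainder with $\G^\theta$ kernel and a parametrix kernel whose $\G^\theta$-singular support lies on the diagonal. Your left form $QP=\Id+R$ is the one the argument actually needs; the chain displayed in the paper's proof of Theorem \ref{MatrixHypo} should be read with $FP=\Id+R$.

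Second, the real work is the $[\fM]$-pseudolocality $\singsupp_{[\fM]}Qv\subseteq\singsupp_{[\fM]}v$ of such an operator under $\bG^\theta\{\preceq\}\fM$ resp.\ $\bG^\theta\{\lhd)\fM$. The paper imports this from \cite[Theorem 3.14]{FuerdoesPreprint}; your Step 2 only asserts it. Semiregularity gives stability under analytic-coefficient differential operators, not under $Q$. For $Q$ you must:
\begin{enumerate}
\item absorb losses of size $\beta!\,\xit^{\delta\bet}$ coming from $\partial_x^\beta$ of its symbol, using log-convexity and $\bG^\theta\preceq\bM$;
\item control the Gevrey-$\theta$ errors produced by the cutoffs $\psi_j$.
\end{enumerate}

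Finally, condition (1) is meant in H\"ormander's convention: $\xi$-derivatives gain $\xit^{-\rho}$ and $x$-derivatives lose $\xit^{\delta}$. As printed, the roles of $x$ and $\xi$ are interchanged. It is the $\xi$-gain that yields the off-diagonal $\G^\theta$ regularity of the kernel, and your remark that $x$-derivatives gain $\xit^{-\rho}$ inherits the misprint.
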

If $P$ is a differential operator with analytic coefficients of order $d$ 
and $\omega$ a weight function then a distribution $u\in\D^\prime(\Omega)$ is an element of
$\vRou{\omega}{\Omega}$ if $P^ku\in L^1_{loc}(\Omega)$ for all $k\in\N_0$ and
for every $V\Subset\Omega$ there are $C,h>0$ such that
\begin{equation}\label{BMTVectors}
    \norm*{P^k u}{L^1(V)}\leq C e^{1/h\varphi^\ast_\omega(hdk)}\qquad \fa k\in\N_0.
\end{equation}
On the other hand, $u$ is an element of $\vBeu{\omega}{\Omega}$ if $P^k u\in L^2_{loc}(\Omega)$ for all $k\in\N_0$ and for every $V\Subset \Omega$ and all $h>0$ there 
is $C>0$ such that \eqref{BMTVectors} is satisfied.
\begin{Thm}\label{ThmIteratesOmega}
	Let $\omega$ be a weight function such that
	\begin{equation}\tag{$\Xi$}\label{omega8}
		\ex H\geq 1:\quad \omega\bigl(t^2\bigr)=O(\omega(Ht)),\qquad t\longrightarrow\infty.
	\end{equation}
Then for any $P\in\An_{\rho,\delta}^{d,d^\prime}(\Omega)$ we have that
\begin{align*}
	\vRou{\omega}{\Omega}&=\Rou{\omega}{\Omega},\\
	\vBeu{\omega}{\Omega}&=\Beu{\omega}{\Omega}.
\end{align*} 
\end{Thm}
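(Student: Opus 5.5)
The plan follows the strategy of \cite{MR4432278}: reduce the Theorem of Iterates to a weight-matrix statement and then use the microlocal parametrix estimates that underlie Theorem~\ref{HypoMainThm}. The inclusion $\DC{\omega}{\Omega}\subseteq\vDC{\omega}{\Omega}$ is the easy direction. Since $P$ has real-analytic coefficients and order $d$, the standard nested-commutator (Cauchy-type) estimate gives, for $f\in\DC{\omega}{\Omega}$,
\begin{equation*}
	\sup_{V}\abs{P^kf}\leq C h_1^{dk}\max_{\alp\leq dk}\sup_{V'}\abs{D^\alpha f}\,\frac{(dk)!}{\alp!},
\end{equation*}
with $V\Subset V'\Subset\Omega$. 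Condition \eqref{BMTDefEst} then yields \eqref{BMTVectors}. For this one uses that $e^{\varphi^\ast_\omega(hj)/h}$ absorbs factorial-type corrections, since $\omega(t)=o(t)$ is implied by $\omega(t^2)=O(\omega(Ht))$ together with $\log t=o(\omega)$. In the Roumieu case $h$ has to be enlarged suitably, using $\omega(2t)=O(\omega(t))$.

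For the nontrivial inclusion I would pass to the associated weight matrix $\fM=\{\bW^\lambda\}_{\lambda>0}$, where $W^\lambda_k=e^{\varphi^\ast_\omega(\lambda k)/\lambda}$. Then $\vRou{\omega}{\Omega}$ is the union over $\lambda$ of the vector spaces with sequence $W^\lambda_{dk}$, and the Beurling space is the intersection. The core step is a microlocal estimate for an individual sequence. Take $u$ with $\norm{P^ku}{L^1(V)}\leq CW_{dk}$. Using conditions (1) and (2), build an analytic-type parametrix $Q$ of order $-d'$ in the class $S^{-d'}_{\rho,\delta}$, with Gevrey-$\theta$ remainder. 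Localize with Ehrenpreis-type cutoffs $\chi_N$ adapted to the sequence, and estimate $\abs{\xi}^{N}\abs{\widehat{\chi_N u}(\xi)}$ by writing $u=Q^kP^ku+R_k u$. The term $Q^kP^ku$ gains $\abs{\xi}^{-d'k}$ and costs $W_{dk}$. The symbol derivative losses in the composition of $k$ parametrices are controlled by (1), with the loss governed by $\xit^{-(\rho-\delta)}$, i.e.\ Gevrey-$\theta$ in the cutoff derivatives. Optimizing $k\approx N/d'$ and $N$ shows that $u$ is in the Denjoy--Carleman class with sequence roughly $\bigl(W_{dk}\bigr)$ reindexed at $k\mapsto (d/d')k$ and multiplied by a Gevrey factor $k!^{\theta}$-type loss. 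This is the analogue of the Bolley--Camus--Metivier loss.

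The final step, and the main obstacle, is to show that the loss is absorbed under \eqref{omega8}. The condition $\omega(t^2)=O(\omega(Ht))$ translates in terms of $\varphi^\ast_\omega$ into a statement that any polynomial reindexing $k\mapsto Ak$ and any power-type multiplicative loss $(W^\lambda_k)^{B}$ are dominated by $W^{\lambda'}_k$ for another index $\lambda'$. In the Roumieu case $\lambda'$ is larger than $\lambda$; in the Beurling case, for every $\lambda'$ there is a $\lambda$ that works. Indeed, \eqref{omega8} gives $\varphi_\omega(2t)\leq C\varphi_\omega(t+\log H)+C$. Dualizing yields
\begin{equation*}
	2\varphi^\ast_\omega(s)\lesssim \varphi^\ast_\omega(Cs)+O(s),
\end{equation*}
that is, $(W^\lambda_k)^2\leq Dh^kW^{\lambda'}_k$. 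Iterating this handles arbitrary powers. Moreover, $\omega(t^2)=O(\omega(Ht))$ forces $\omega(t)=O(\log^{c}t)$-type slow growth relative to Gevrey, so $\omega(t)=o(t^{1/\theta})$. Hence the Gevrey-$\theta$ losses are dominated by every $W^\lambda$, consistent with Theorem~\ref{HypoMainThm}(4). Combining these facts, the weight-matrix classes coincide, and taking unions or intersections over $\lambda$ gives both equalities. I expect the delicate part to be the uniform bookkeeping in the $k$-fold parametrix composition, ensuring that the constants grow only geometrically in $k$. If a direct attack fails there, I would instead first prove the weight-matrix version of Theorem~\ref{HypoMainThm} for vectors, following \cite{MR0745974}.
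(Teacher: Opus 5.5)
Your outline is correct and matches the paper's argument. You pass to the associated matrix $\mathfrak{W}$, derive Bolley--Camus--M\'etivier-type Fourier estimates for the Ehrenpreis-localized $u$ with a loss of the form $W_{(d/d')N}$ times Gevrey-$\theta$ factors, and absorb that loss using the fitting inequality $2\zeta_\omega(t,\lambda)\le\zeta_\omega(t,A\lambda)+\gamma(t+1)$, which $(\Xi)$ gives by duality, together with $\omega(t)=o(t^{\rho-\delta})$ (superordination to $\bG^\theta$), exactly as in the paper's closing Remark. The one difference is the analytic vehicle: the paper does not compose a single parametrix $Q^k$ but takes the parametrices $A_N$ of $P^N$ and their estimates (error terms $S_k$, commutator terms $A_k[P,\psi]P^{k-1}u$, low-frequency cut-off $\chi_N$) from Bolley--Camus--M\'etivier, which is your stated fallback.
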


\section{Ultradifferentiable Hypoellipticity}
If $\bM$ and $\bN$ are two weight sequences then we set
\begin{align*}
	\bM&\leq\bN & &:\Longleftrightarrow & M_k&\leq N_k & \fa k&\in\N_0,\\
	\bM&\preceq\bN & &:\Longleftrightarrow & \ex C,h>0:\quad M_k&\leq Ch^k N_k & \fa k&\in\N_0,\\
    \shortintertext{and}
    \bM&\lhd\bN & &:\Longleftrightarrow & \fa h>0,\,\ex C>0:\quad M_k&\leq Ch^k N_k &
    \fa k&\in\N_0.
\end{align*}
\begin{Def}
	A weight matrix $\fM$ is a family of weight sequences such that for each pair $\bM,\bN\in\fM$
	we have either $\bM\leq\bN$ or $\bN\leq\bM$.
\end{Def}
Let $\Omega\subseteq\R^n$ be an open set and $\fM$ a weight matrix.
 A smooth function $f\in\E(\Omega)$ is ultradifferentiable of class $\{\fM\}$ if
 for each compact set $K\subseteq\Omega$ there are constants $C,h>0$ and some $\bM\in\fM$
 such that
 \begin{equation}\label{ClassDef}
 	\sup_{x\in K}\abs*{D^\alpha f(x)}\leq Ch^{\alp}M_{\alp},\qquad \fa\alpha\in\N_0^n.
 \end{equation}
We say that $f$ is of class $(\fM)$ if for each compact set $K\subseteq\Omega$ 
and every $h>0$ and $\bM\in\fM$ there is a constant $C>0$ such that \eqref{ClassDef} is satisfied.
We write $\DC{\fM}{\Omega}$ for the space of functions of class $[\fM]$. 

If $\fM$ and $\fN$ are two weight matrices then we write $\fM\{\preceq\}\fN$ if
for all $\bM\in\fM$ there is some $\bN\in\fN$ such that $\bM\preceq\bN$ and
$\fM(\preceq)\fN$ if for all $\bN\in\fN$ there is $\bM\in\fM$ such that $\bM\preceq\bN$. Moreover, $\fM\{\lhd)\bN$ if $\bM\lhd \bN$ for all $\bM\in\fM$ and
all $\bN\in\fN$.
Then $\fM[\preceq]\fN$ implies that $\DC{\fM}{\Omega}\subseteq\DC{\fN}{\Omega}$
whereas $\Rou{\fM}{\Omega}\subseteq\Beu{\fN}{\Omega}$ when $\fM\{\lhd)\fN$.
\begin{Def}
Let $\fM$ be a weight matrix.
\begin{enumerate}
	\item We say that $\fM$ is R-semiregular if the following conditions hold:
	\begin{gather}\label{AnalyticIncl}
	\fa\bM\in\fM\quad	\lim_{k\rightarrow\infty}\sqrt[k]{\frac{M_k}{k!}}=\infty\\
	\fa\bM\in\fM \ex\bN\in\fM	\ex Q>0\quad M_{k+1}\leq Q^{k+1}N_k\qquad \fa k\in\N_0.\label{R-Deriv}
	\end{gather}
\item The weight matrix $\fM$ is B-semiregular if \eqref{AnalyticIncl} and 
\begin{equation}\label{B-Deriv}
\fa\bN\in\fM\ex\bM\in\fM\ex Q>0\quad M_{k+1}\leq Q^{k+1}N_k\qquad \fa k\in\N_0
\end{equation}
hold.
\end{enumerate}
\end{Def}
\begin{Rem}\hspace{1ex}
	\begin{enumerate}
		\item Condition \eqref{AnalyticIncl} means that $\An(\Omega)\subsetneq\DC{\fM}{\Omega}$.
		\item Condition \eqref{R-Deriv} (resp.~\eqref{B-Deriv}) implies that
		the Roumieu class $\Rou{\fM}{\Omega}$ (resp.~the Beurling class $\Beu{\fM}{\Omega}$)
		is closed under derivation.
		\item In fact, if $\fM$ is [semiregular] then $\DC{\fM}{\Omega}$ is closed under composition with real-analytic mappings: Let $\Phi:\Omega_1\rightarrow\Omega_2$ be
		a real-analytic mapping then $f\circ\Phi\in\DC{\fM}{\Omega_1}$ for all $f\in\DC{\fM}{\Omega_2}$.
	\end{enumerate}
\end{Rem}

We are going to need the following characterization of $\DC{\fM}{\Omega}$ from \cite{MR4002151}.
\begin{Thm}[{\cite[Proposition 5.1]{MR4002151}}]\label{FourierCharThm}
	Let $u\in\D^\prime(\Omega)$, $\fM$ a weigth matrix and $p_0\in\Omega$.
	Then the following holds:
	\begin{enumerate}
		\item If $\fM$ is R-semiregular then $u$ is of class $\{\fM\}$ near $p_0$ if and only if
		for some neighborhood $V$ of $p_0$ there is a bounded sequence $u_k\in\E^\prime(\Omega)$
		such that $u_k\vert_V=u\vert_V$ and there are a constant $h>0$ and some $\bM\in\fM$
		so that 
		\begin{equation}\label{FourierChar}
			\sup_{\substack{\xi\in\R^n\\ k\in\N_0}}\frac{\xit^k\abs{\hat{u}_k(\xi)}}{h^kM_k}<\infty.
		\end{equation}
	\item If $\fM$ is B-semiregular then $u$ is of class $(\fM)$ near $p_0$ if and only if 
	for some neighborhood $V$ of $p_0$ there is a bounded sequence $u_k\in\E^\prime(\Omega)$
	such that $u_k\vert_V=u\vert_V$ and \eqref{FourierChar} holds for all $h>0$ and every $\bM\in\fM$.
	\end{enumerate}
\end{Thm}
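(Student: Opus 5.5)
The plan is to follow Hörmander's classical argument for the analytic and Gevrey wave front set (the cut-off sequence technique, cf.\ Hörmander's Lemma 8.4.4 and Theorem 8.4.2), adapted to the weight matrix setting. I will treat the Roumieu case (1) in detail; the Beurling case (2) is the same argument with the quantifiers over $h$ and $\bM\in\fM$ reversed, using \eqref{B-Deriv} wherever \eqref{R-Deriv} is used below.

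\emph{Sufficiency.} Suppose $u_k\vert_V=u\vert_V$, $(u_k)$ is bounded in $\E^\prime(\Omega)$, and \eqref{FourierChar} holds with some $h$ and $\bM$.
\begin{enumerate}
\item Boundedness and the Paley--Wiener theorem give uniform constants $C,N$ with $\abs{\hat u_k(\xi)}\leq C(1+\xit)^N$ for all $k$.
\item Fix $\alpha$ and set $k=\alp+n+1$. For $x\in V$ write
\begin{equation*}
D^\alpha u(x)=(2\pi)^{-n}\int\xi^\alpha\hat u_k(\xi)e^{ix\xi}\,d\xi .
\end{equation*}
\item Split the integral at $\xit=1$. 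The part $\xit\leq1$ is bounded uniformly in $k$ by step 1.
\item On $\xit>1$, estimate $\xit^{\alp}\abs{\hat u_k}\leq \xit^{-n-1}\cdot Ch^kM_k$, which is integrable. This yields $\abs{D^\alpha u}\leq C'h^{\alp}M_{\alp+n+1}$ on $V$.
\item Apply \eqref{R-Deriv} $n+1$ times to find $\bN\in\fM$ and $Q$ with $M_{\alp+n+1}\leq Q^{c(\alp+1)}N_{\alp}$. Hence $u$ is of class $\{\fM\}$ on $V$.
\end{enumerate}
In the Beurling case, \eqref{B-Deriv} plays the same role for a given target sequence $\bN$.

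\emph{Necessity.} Let $u$ be of class $\{\fM\}$ on a neighbourhood $W\Supset V$ of $p_0$, with bounds $Ch^{\alp}M_{\alp}$ for some $\bM$.
\begin{enumerate}
\item Construct the standard Ehrenpreis--Hörmander cut-offs $\chi_k\in\D(W)$ with $\chi_k=1$ on $V$ and
\begin{equation*}
\abs{D^{\beta}\chi_k}\leq C_0(C_0k)^{\bet},\qquad \bet\leq k .
\end{equation*}
These are obtained by convolving the indicator of an intermediate set with $k$ copies of a mollifier of width $\sim\eps/k$.
\item Set $u_k=\chi_k u$. The sequence is bounded in $\E^\prime$, because the derivatives of $\chi_k$ up to any fixed order are uniformly bounded.
\item For $\alp=k$, Leibniz gives
\begin{equation*}
\abs{\xi^\alpha\hat u_k(\xi)}\leq \norm{D^\alpha(\chi_ku)}{L^1}\leq C\sum_{\beta\leq\alpha}\binom{\alpha}{\beta}(C_0k)^{\bet}h^{k-\bet}M_{k-\bet}.
\end{equation*}
Since $\xit^k\leq n^{k/2}\max_{\alp=k}\abs{\xi^\alpha}$, it remains to bound the right-hand side by $C''h''^kN_k$ for some $\bN\in\fM$.
\item Using $k^{j}\leq e^kj!$, the claim reduces to
\begin{equation*}
j!\,M_{k-j}\leq C^kN_k .
\end{equation*}
\end{enumerate}

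\emph{Main obstacle.} I expect this last combinatorial inequality to be the hardest step. It holds directly if $M_k/k!$ is log-convex, or almost increasing. The general case is what needs work. My first attempt would be to apply log-convexity of $\bM$ in the form $M_jM_{k-j}\leq M_k$ (using $M_0=1$), and to use \eqref{AnalyticIncl} to absorb $j!$. Note that \eqref{AnalyticIncl} only gives a lower bound on $M_j$, so this does not close the argument on its own. If it fails, I would replace $\bM$ by an equivalent sequence, or pass to a larger sequence in $\fM$ via \eqref{R-Deriv}, for which $j!\,M_{k-j}\lesssim C^kM_k$ holds. Alternatively, I would modify the cut-offs so that only $\bet\leq k$ derivatives with Gevrey-type bounds are needed, since \eqref{AnalyticIncl} guarantees that analytic-type losses are dominated by $\fM$.
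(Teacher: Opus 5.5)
Your sufficiency argument is correct in both cases. Your overall route is the standard one: uniform Paley--Wiener bounds in one direction, and Ehrenpreis--H\"ormander cut-offs $u_k=\chi_ku$ in the other. The paper gives no proof of this statement and only quotes \cite[Proposition 5.1]{MR4002151}.

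The gap is in the necessity part. You stop at $j!\,M_{k-j}\leq C^kN_k$. You also discard the argument via $M_jM_{k-j}\leq M_k$, on the grounds that \eqref{AnalyticIncl} ``only gives a lower bound on $M_j$''. That reasoning is the wrong way round. To absorb $j!$ you need $j!\lesssim M_j$, which is exactly a lower bound on $M_j$. \eqref{AnalyticIncl} supplies it in strong form: for every $\eps>0$ there is $C_\eps$ with $j!\leq C_\eps\eps^jM_j$ for all $j\in\N_0$. Combining this with $k^j\leq e^kj!$ and with $M_jM_{k-j}\leq M_k$ (log-convexity and $M_0=1$) yields
\begin{equation*}
\sum_{j=0}^k\binom{k}{j}(C_0k)^jh^{k-j}M_{k-j}\leq C_\eps e^k\sum_{j=0}^k\binom{k}{j}(C_0\eps)^jh^{k-j}M_k=C_\eps\bigl(e(C_0\eps+h)\bigr)^kM_k .
\end{equation*}
So \eqref{FourierChar} holds with the same $\bM$ and constant $e\sqrt{n}(C_0\eps+h)$.
\begin{itemize}
\item In the Roumieu case, take $\eps=1$.
\item In the Beurling case, take $\eps=h$ and use that the $(\fM)$-bounds hold for every $h>0$ and every $\bM\in\fM$. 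This is legitimate because your cut-offs, and hence the $u_k$, do not depend on $h$ or $\bM$.
\end{itemize}
This direction needs no passage to another sequence, no regularity of $M_k/k!$, and neither \eqref{R-Deriv} nor \eqref{B-Deriv}. As submitted, however, the decisive estimate is left unproved, and the one argument that proves it is explicitly rejected. The necessity half is therefore incomplete.

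A smaller inaccuracy concerns boundedness of $(\chi_ku)$ in $\E^\prime(\Omega)$. With the bound $\abs{D^\beta\chi_k}\leq C_0(C_0k)^{\bet}$, the derivatives of $\chi_k$ of any fixed order $\geq1$ are not uniformly bounded in $k$, so your stated reason is wrong. The conclusion still holds. The supports lie in a fixed compact set $K$, $\abs{\chi_k}\leq C_0$, and $u$ is continuous on $K$, so $\abs{\langle\chi_ku,\phi\rangle}\leq C_0\norm{u}{L^1(K)}\sup\abs{\phi}$. Alternatively, use H\"ormander's refined cut-offs with $\abs{D^{\alpha+\beta}\chi_k}\leq C_\alpha^{\bet+1}k^{\bet}$ for $\bet\leq k$, as the paper does for $\varphi_N$.
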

For the definition and theory of the wavefront set $\WF_{[\fM]} u$ associated to ultradifferentiable classes given by weight matrices we refer to \cite{MR4002151} and
\cite{FuerdoesPreprint}.
\begin{Thm}\label{MatrixHypo}
	Let $P\in\An_{\rho,\delta}^{d,d^\prime}(\Omega)$. Then the following holds:
	\begin{enumerate}
		\item $P$ is $\{\fM\}$-hypoelliptic in $\Omega$ for any R-semiregular weight matrix
		$\fM$ such that $\bG^{\theta}\{\preceq\}\fM$.
		\item $P$ is $(\fM)$-hypoelliptic in $\Omega$ for any B-semiregular weight matrix
		$\fM$ such that $\bG^{\theta}\{\lhd)\fM$.
	\end{enumerate}
\end{Thm}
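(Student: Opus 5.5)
Our plan is to prove Theorem \ref{MatrixHypo} microlocally, showing
\begin{equation*}
\WF_{[\fM]}u\subseteq \WF_{[\fM]}Pu\cup\operatorname{Char}P ,
\end{equation*}
where in fact $\operatorname{Char}P=\emptyset$ by condition (2). Then $\singsupp_{[\fM]}u\subseteq\singsupp_{[\fM]}Pu$, and the reverse inclusion is automatic because $P$ has real-analytic coefficients and $\DC{\fM}{\Omega}$ is closed under derivation and multiplication by analytic functions (Remark, using \eqref{R-Deriv} resp.\ \eqref{B-Deriv}). Since hypoellipticity is local, we fix $p_0\in\Omega$ with $Pu$ of class $[\fM]$ near $p_0$. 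We already know $u$ is $C^\infty$ near $p_0$, since $P$ is hypoelliptic in the $C^\infty$ sense (a classical consequence of (1)--(2)). We then aim to produce a bounded sequence $u_k$ as in Theorem \ref{FourierCharThm}.

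The core is an Ehrenpreis-type cutoff construction combined with an analytic parametrix of Gevrey order $\theta$. First, fix nested neighbourhoods $V_2\Subset V_1\Subset V_0$ of $p_0$ and, for each $N$, cutoff functions $\chi_N\in\D(V_0)$ equal to $1$ on $V_1$ with $|D^\alpha\chi_N|\le C(CN)^{\alp}$ for $\alp\le N$. Using condition (1), with its Gevrey bounds $\alp!\bet!$ and the factor $\xit^{-\rho\alp+\delta\bet}$, we build for each $N$ a finite parametrix symbol
\begin{equation*}
q_N(x,\xi)=\sum_{j<N/\theta'} q_j(x,\xi),
\end{equation*}
where $q_0=1/p$ and $q_j$ is given by the usual recursion. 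The expected estimates are
\begin{equation*}
|\partial^\alpha_x\partial^\beta_\xi q_j|\le CA^{j+\alp+\bet}j!\,\alp!\,\bet!\,|p|^{-1}\xit^{-(\rho-\delta)j-\rho\alp+\delta\bet}.
\end{equation*}
We then write
\begin{equation*}
\chi_N u=q_N(x,D)\chi_N Pu + q_N(x,D)[\chi_N,P]u + R_N u ,
\end{equation*}
where the remainder $R_N$ has symbol bounded by $C^N N!\,\xit^{-(\rho-\delta)N}$. The factor $N!\,\xit^{-(\rho-\delta)N}$ is exactly of the form $\xit^{-k}(k!)^{\theta}$ with $k=(\rho-\delta)N$; this is how the Gevrey order $\theta$ enters. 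Because $\bG^\theta\{\preceq\}\fM$ (resp.\ $\{\lhd)$), this piece is dominated by $h^kM_k/\xit^{k}$ as required in \eqref{FourierChar}.

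Next we treat the three terms. The term $q_N(x,D)\chi_NPu$ inherits the decay of $\widehat{\chi_NPu}$: the symbol $q_N$ is bounded by $C\xit^{-d'}$, and $\chi_N Pu$ satisfies \eqref{FourierChar} since $Pu\in[\fM]$ near $\operatorname{supp}\chi_N$. The commutator term requires the usual Hörmander/Ehrenpreis iteration. Since $[\chi_N,P]$ is supported where $\chi_N$ is non-constant, we must bootstrap: we apply the same identity to $u$ on a shrinking family of neighbourhoods (at most $N$ steps), each step gaining $\xit^{-(\rho-\delta)}$ at the cost of a factor $CN$ from the derivatives of the cutoff. The resulting estimate $(CN)^{N}\xit^{-(\rho-\delta)N}$ is again of Gevrey-$\theta$ type. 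Setting $u_k:=\chi_{N(k)}u$ with $N(k)\approx\theta k$ then yields a bounded family in $\E'$ satisfying \eqref{FourierChar} for some $\bM\in\fM$ and $h>0$ (Roumieu case), or for all of them (Beurling case). In the Beurling case we use $\bG^\theta\lhd\bN$ for every $\bN$ to absorb the constants $C^k$ into arbitrarily small $h^k$, and \eqref{B-Deriv} to handle the shift between the indices $k$ and $k+d$.

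We expect the main obstacle to be the commutator bootstrap. The pseudodifferential calculus here is non-classical: the symbol class is $S_{\rho,\delta}$ weighted by $|p|$, and $|p|^{-1}$ is only bounded by $\xit^{-d'}$, not by $\xit^{-d}$. As a result the composition remainders and the growth of $q_j$ must be tracked with analytic constants uniformly in $N$. Our first attempt would be to work directly with oscillatory integrals and Fourier-transform estimates, in the style of Hashimoto--Morimoto--Matsuzawa \cite{MR711439}, whose Gevrey-$\theta$ argument this mimics. The only change needed is to replace the comparison with $(k!)^\theta$ by the sequences of $\fM$, via $\bG^\theta\{\preceq\}\fM$ together with \eqref{R-Deriv}/\eqref{B-Deriv}, which allow a loss of finitely many derivatives.
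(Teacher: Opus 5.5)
Your route is genuinely different from the paper's. The paper does no symbol calculus. It takes the Gevrey parametrix of Hashimoto--Morimoto--Matsuzawa \cite[Theorem 3.1]{MR711439} as a black box and uses only two qualitative properties of it: the kernel of $F$ is of class $\G^\theta$ off the diagonal, and the remainder $R$ has a $\G^\theta$ kernel. The general result \cite[Theorem 3.14]{FuerdoesPreprint} then gives the transfer to $\fM$. For R-semiregular $\fM$ with $\bG^\theta\{\preceq\}\fM$ (resp.\ B-semiregular $\fM$ with $\bG^\theta\{\lhd)\fM$), $F$ does not enlarge $\singsupp_{[\fM]}$ and $R$ maps $\E^\prime(U)$ into $\DC{\fM}{U}$. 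The parametrix identity then yields the hypoellipticity directly. So the weight matrix enters only through the transfer of off-diagonal Gevrey-$\theta$ regularity of kernels, and nothing has to be tracked uniformly in $N$. You instead propose to re-prove the theorem of \cite{MR711439} quantitatively, with $M_k$ in place of $(k!)^\theta$. That is a legitimate strategy, and $\bG^\theta\{\preceq\}\fM$, log-convexity and \eqref{R-Deriv}/\eqref{B-Deriv} are the right tools for absorbing the Gevrey losses and index shifts. But it puts all the weight on the step you leave open.

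That step is a genuine gap, and the mechanism you sketch for it is not justified as stated. You iterate $q_N(x,D)[\chi^{(j)},P]$ over $N$ nested cutoffs and charge a factor $CN$ per step. This presupposes that each factor gains $\xit^{-(\rho-\delta)}$ with a constant independent of the frequency weight already reached. But that weight grows to order $\sim N$ along the iteration. Estimating a factor at that level, whether by Peetre's inequality or by commuting with $\langle D\rangle^{t}$, brings in many more derivatives of the cutoffs than one per step. Keeping the total number of derivatives that fall on the cutoffs below $\sim N$ is the whole difficulty. A related problem: $q_N(x,D)\circ P$ has a non-terminating composition expansion, since the coefficients of $P$ are analytic, not constant. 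Your bound on $R_N$ therefore needs an exact remainder formula, whereas $P\circ q_N$ terminates.

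The missing idea is the one the paper imports: Gevrey-$\theta$ pseudolocality of the parametrix. Take $u_k=\psi_{N(k)}u$ with Ehrenpreis cutoffs $\psi_N$ supported in a fixed compact subset of the interior of $\{\chi=1\}$. Then the commutator term $\psi_N q_N(x,D)[P,\chi]u$ is controlled by off-diagonal kernel estimates, obtained by integrating by parts in $\xi$, and no iteration is needed. Alternatively, organize the iteration as an explicit recursion for an approximate solution of ${}^tPw=\chi_N e^{-i\langle\cdot,\xi\rangle}$, in the style of H\"ormander's proof of microlocal analytic regularity. There the derivatives on $\chi_N$ are counted globally, and $u$ is only paired with an $N$-th order small error.

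Finally, condition (2) bounds the full symbol, not the principal symbol. For example, $D_1^2+D_2^4\in\An^{4,2}_{1/2,0}$ has principal symbol $\xi_2^4$, so $\operatorname{Char}P=\emptyset$ is false in the usual sense. Drop the microlocal framing; you never use it.
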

\begin{proof}
	For any $U\Subset\Omega$ there are operators $F,R:\E^\prime(U)\rightarrow \D^\prime(U)$
	such that $PF=\Id+R$ in $\Omega^\prime$ by \cite[Theorem 3.1]{MR711439}.
	Moreover, the kernel of $R$ is in $\G^{\theta}(U)$ whereas the kernel of $F$, which we also
	denote by $F$, satisfies 
	$\singsupp_\theta F\subseteq \Set{(x,x)\given x\in U}\subseteq U\times U$. 
	Now, if $\fM$ is an R-semiregular weight matrix such that $\bG^\theta\{\preceq\}\fM$,
	then $\singsupp_{\{\fM\}}Fu\subseteq \singsupp_{\{\fM\}}u$ and $Ru\in\Rou{\fM}{U}$
	for all $u\in\E^\prime(U)$ by \cite[Theorem 3.14]{FuerdoesPreprint}.
	Thence
	\begin{equation*}
		\singsupp_{\{\fM\}} u=\singsupp_{\{\fM\}} (PF-R)u\subseteq\singsupp_{\{\fM\}}Fu\subseteq
	\singsupp_{\{\fM\}} u
	\end{equation*}
    for all $u\in\E^\prime(U)$, which proves (1).
    The second part is proven completely analogously.
\end{proof}
Theorem \ref{MatrixHypo} immediately implies Theorem \ref{HypoMainThm}(1),(2).
The other parts of Theorem \ref{HypoMainThm} also follow from Theorem \ref{MatrixHypo}
if we also consider the Remark below.
\begin{Rem}\label{BMTRemark1}
    If $\omega$ is a weight function then the weight matrix $\mathfrak{W}$
    associated to $\omega$ consists of the weight sequences $\bW^\lambda$, $\lambda>0$, given by
    \begin{equation*}
        W^{\lambda}_k=e^{\tfrac{1}{\lambda}\varphi^\ast_\omega(\lambda k)}.
    \end{equation*}
    Then $\mathfrak{W}$ has the following properties:
    \begin{gather*}
        \fa \lambda>0 :\quad W_{j+k}^\lambda\leq W_{j}^{2\lambda}W_k^{2\lambda}\qquad
        \fa j,k\in\N_0,\\
        \fa h>0 \ex A\geq 1 \fa\lambda\geq 0 \ex D\geq 1:\quad h^kW^\lambda_k\leq DW^{A\lambda}_k\qquad \fa k\in\N_0. 
    \end{gather*}
    In fact, $\DC{\omega}{\Omega}=\DC{\mathfrak{W}}{\Omega}$ as topological spaces,
    cf.~\cite{MR3285413}.
    Moreover it is well-known, that $\omega(t)=O(t^{1/s})$, $s\geq 1$ is equivalent to
    $\bG^s\{\preceq\}\mathfrak{W}$ and $\bG^s(\preceq)\mathfrak{W}$.
    In the same manner, $\omega(t)=o(t^{1/s})$ is equivalent to
    $\bG^s\{\lhd)\mathfrak{W}$.
\end{Rem}
\section{Ultradifferentiable scales and a general theorem of iterates}
If $P$ is a differential operator with analytic coefficients of order $d$ 
and $\fM$ is a weight matrix satisfying \eqref{AnalyticIncl} then the space $\vRou{\fM}{\Omega}$ consists
of those distributions $u\in\D^\prime(\Omega)$ such that $P^k u\in L^1_{loc}(\Omega)$
and for all $V\Subset \Omega$ there are $\bM\in\fM$ and $C,h>0$ such that
\begin{equation}\label{MatrixVectors}
\norm*{P^k}{L^1(V)}\leq Ch^k M_{dk}\qquad \fa k\in\N_0,
\end{equation}
whereas a distribution $u\in\D^\prime(\Omega)$ is an element of $\vBeu{\fM}{\Omega}$
if $P^ku\in L^1_{loc}(\Omega)$ for all $k\in\N_0$ and for all $V\Subset \Omega$ and
each $h>0$ there is a constant $C>0$ such that \eqref{MatrixVectors} holds.

If $\omega$ is a weight function with $\omega(t)=o(t)$ and $\mathfrak{W}$ is the weight matrix associated to $\omega$ then, in view of Remark \ref{BMTRemark1},
we have that $\vDC{\omega}{\Omega}=\vDC{\mathfrak{W}}{\Omega}$ for any
differential operator $P$ with analytic coefficients.

The rest of this paper is dedicated to the proof of Theorem \ref{ThmIteratesOmega}
which will be in the same spirit as the proof of \cite[Theorem 1.1]{MR4432278}.
For this we need to recall the definition of ultradifferentiable scales
from \cite[Subsection 4A]{MR4432278}. However, in order to be able to use respective adapt the arguments of the proof of \cite[Theoreme 1.2]{MR632764} we have to modify
the definition of ultradifferentiable scales a little bit.
\begin{Def}
	Let $\Lambda$ be a totally ordered set. A mapping $\zeta: \Lambda\times [0,+\infty)\longrightarrow [0,\infty)$ is said to be a generating function if
	for each $\lambda\in\Lambda$ the function $\zeta_\lambda=\zeta(\lambda,\,.\,)$ is continuous
	and $\zeta_\lambda(t)/t$ is increasing on $t\geq 1$
	and the following conditions are satisfied:
	\begin{gather}
		\zeta_\lambda(0)=0; \\
	\text{The sequence	}\N\ni k\mapsto \zeta(k)-\zeta(k-1)\text{ is increasing;}\\
\label{third}	\lim_{t\rightarrow\infty}\left(\zeta_{\lambda}(t)-\log t\right)=\infty.
	\end{gather}
We associate to a generating function a scale $(\bM^\lambda)_\lambda$ by setting
\begin{equation*}
	M_k^\lambda=\exp\left(\zeta(\lambda,k)\right).
\end{equation*}
\end{Def}
The matrix $\fM_\zeta=\fM=\Set{\bM^\lambda\given\lambda\in\Lambda}$ is the weight matrix associated
to the scale $(\bM^\lambda)_\lambda$.
We see that $\fM_\zeta$ satisfies \eqref{AnalyticIncl} due to \eqref{third}.

We are going to need the following conditions:
\begin{align}
   \label{R-mg} \fa \lambda\in\Lambda \ex \lambda^\prime \ex \gamma>0: \quad \zeta_\lambda(2t)&\leq 2\zeta_{\lambda^\prime}(t)+\gamma t\qquad t\geq 1\\
   \label{B-mg} \fa \lambda\in\Lambda \ex \lambda \ex \gamma>0: \quad \zeta_\lambda(2t)&\leq 2\zeta_{\lambda^\prime}(t)+\gamma t\qquad t\geq 1
\end{align}
Note that \eqref{R-mg} (resp.~\eqref{B-mg}) implies that $\fM_{\zeta}$ satisfies \eqref{R-Deriv} (resp.~\eqref{B-Deriv}), cf.~\cite{MR4509817}.
In order that every weight sequence $\bM^\lambda$ is itself semiregular we need to require that
\begin{equation}\label{Single-Deriv}
    \fa \lambda\in\Lambda \ex \gamma>0:\quad \zeta_\lambda(p+1)-\zeta_\lambda(p)\leq \gamma(p+1)\qquad p\in\N_0.
\end{equation}
Furthermore, we also consider the following conditions
\begin{align}
   \label{R-Fitting} \fa \alpha>0 \fa \lambda \in \Lambda\ex\lambda^\prime\in\Lambda \ex \gamma>0:\quad \alpha\zeta_\lambda(t)&\leq \zeta_{\lambda^\prime}(t)+\gamma(t+1)\qquad t\geq 1,\\
    \fa \alpha>0 \fa \lambda^\prime \in \Lambda\ex\lambda\in\Lambda \ex \gamma>0:\quad \alpha\zeta_\lambda(t)&\leq \zeta_{\lambda^\prime}(t)+\gamma(t+1)\qquad t\geq 1.\label{B-fitting}
\end{align}
Finally, for $\sigma\geq 1$ we say that the ultradifferentiable scale
$(\bM^\lambda_\zeta)$ is superordinated to $\bG^s$ if
\begin{equation*}
    \fa \lambda\in\Lambda:\quad \lim_{t\rightarrow}\left(\frac{\zeta_\lambda(t)}{t}-
    \sigma\log t\right)=\infty.
\end{equation*}
Clearly that means that $\{\bG^s\}\{\lhd)\fM_\zeta$.

We are now able to begin with the proof of Theorem \ref{ThmIteratesOmega}.
To this end assume that $P\in\An_{\rho,\delta}^{d,d^\prime}(\Omega)$.
We are making use of the parametrices $A_N$ of the iterated operators
$P^N$, $N\in\N$, constructed in \cite{MR0745974}.
Here we only briefly summarize the results we are going to need
to analyze the regularity of ultradifferentiable vectors of $P$.
The symbols $a_N$ of the operators $A_N$ have asymptotic expansions
$a_N(x,\xi)\sim\sum_{j\geq 1} a_{N,j}(x,\xi)$ with the symbols $a_{N,j}$
being constructed in \cite[II-1.2]{MR0745974}.
In fact, by \cite[Equation (1.10)]{MR0745974} the symbols $a_N$ are of the
form
\begin{equation*}
    a_N(x,\xi):=\sum_{j=0}^\infty \tilde{\chi}_j(\xi/\lambda)a_{N,j}(x,\xi)
\end{equation*}
for $x\in V\Subset\Omega$ and $\xi\in\R^n$. Here $\lambda\geq 1$ 
to be specified later on. 
Moreover $\tilde{\lambda}_j (\eta)\in\CC^\infty(\R^n)$ is a sequence 
such that $\tilde{\chi}_0(\eta)=0$ for $\abs{\eta}\leq L$ and 
$\tilde{\chi}_0(\eta)$ if $\abs{\eta}\geq 2L$ and for $j\geq 1$ we have that
\begin{equation*}
    \begin{cases}
\tilde{\chi}_j(\eta)=0 & \abs{\eta}\leq L\frac{j}{\rho-\delta};\\
\tilde{\chi}_j(\eta)=1 & \abs{\eta}\geq 2L\frac{j}{\rho-\delta};\\
\abs*{D^\alpha \tilde{\chi}_j(\eta)}\leq C^{\alp +1}j^{\alp(1- 1/(\rho-\delta)}
& \alp\leq j
\end{cases}
\end{equation*}
for a constant $C>0$ independent of $j$, cf.~\cite[pp.~57-58]{MR0745974}.

According to \cite[Corollaire 2.7]{MR0745974} there are $\lambda>0$,
$\eps>0$ and $C>0$ such that for all $\xi\in\R^n$, $\xit\geq C$, all $N\in\N$, $\psi\in\CC^\infty_0(\overline{V})$ and all $u\in\D^\prime(\Omega)$ with $P^ku\in L^1_{loc}(\Omega)$ for
$k\in\{1,\dotsc,N\}$ then 
\begin{equation*}
    \widehat{\psi u}(\xi)=\F\bigl(A_N\psi P^Nu\bigr)(\xi)+ 
    \sum_{k=1}^N\F\bigl(S_k\psi P^{k-1}u\bigr)(\xi)+\sum_{k=1}^N\F\bigl(
    A_k[P,\psi]P^{k-1}u\bigr)(\xi)
\end{equation*}
where $S_k$ are error terms (defined in \cite[p.~63]{MR0745974})
satisfying 
\begin{equation*}
    \abs*{\widehat{S_kv}(\xi)}\leq C^{k+1} \abs{\xi}^{-d^\prime k} e^{\eps \abs{\xi}^{\rho-\delta}}\norm{v}{L^1(V)}
\end{equation*}
for $v\in L^k(V)$ and $\abs{\xi}\geq 2\lambda L$.

Now let $x_0\in V\Subset \Omega$
and we fix also $\psi\in\CC^\infty_0(V)$ equal to $1$ near $x_0$.
We choose also a sequence $\varphi_N\in\D(V)$ such that $\varphi_N=1$ near $x_0$ and $\varphi_N\psi=\varphi_N$ and
for all $\beta\in\N^n_0$ there is a constant $C_\beta$ so that 
\begin{equation*}
	\abs*{D^{\alpha+\beta}\varphi_N(x)}\leq C_\beta^{\alp+1} N^{\alp},\qquad x\in V,\; \alp\leq 2d^\prime N
\end{equation*} 
for all $N\geq 1$.
Moreover, we set $\chi_N(x)=\tilde{\chi}_{qN}(x/2\lambda)$ where $q=[2d^\prime/\rho]+2$.
Then we have 
\begin{equation}\label{Fundamental}
	 \widehat{\varphi_N u}= \widehat{\varphi_N \psi u}
	 =
	 \begin{multlined}[t]
	 	\hat{\varphi}_N\ast(1-\chi_N)\widehat{\psi u}+\hat{\varphi}_N\ast \chi_N \F(A_N \psi P^N u)\\+\sum_{k=1}^N \hat{\varphi}_N\ast\chi_N \F\left(S_k \psi P^{k-1}u\right)+\sum_{k=1}^N
	 	\hat{\varphi}_N\ast\chi_N\F\left(A_k[P,\psi]P^{k-1}u\right)
	 \end{multlined}
\end{equation}
for $u\in\D^\prime(\Omega)$ with $P^ku\in L^1_{loc}(\Omega)$ for all $k\in\N_0$.

We need to estimate the right-hand side of \eqref{Fundamental} first
for $u\in\D^\prime(\Omega)$ satisfying the following estimates:
There are a weight sequence $\bM$ and constants such that
\begin{equation*}
    \norm{P^Nu}{L^2(V)}\leq Ch^N M_{dN},\qquad \fa N\in\N_0. 
\end{equation*}

By the proof of \cite[Corollary 2.9]{MR0745974} 
we obtain that there is a constant $C_1>0$ such that for $\xit> C_1$
and $1\leq j\leq k$ we have that 
\begin{equation*}
	\abs*{\hat{\varphi}_k\ast\chi_N \F\left(S_k \psi P^{k-1}u\right)+\sum_{k=1}^N
	\hat{\varphi}_N\ast\chi_N\F\left(A_k[P,\psi]P^{k-1}u\right)}\leq C_1^{N+1}
N^{\tfrac{d^\prime k}{\rho-\delta}} h^k M_{d(k-1)} \xit^{-d^\prime N-d^\prime k+d-1}.
\end{equation*}
On the other hand, observe that
\begin{equation*}
	\sum_{k=1}^N \frac{M_{d(k-1)}}{\xit^{d^\prime k}}
    \leq\sum_{k=1}^N \left(\frac{\Lambda_{dN}}{\xit^{d^\prime/d}}\right)^{dk}
    \leq \sum_{k=1}^\infty 
	\left(\frac{1}{2^d}\right)^j<\infty 
\end{equation*}
if $\xit^{d^\prime/d}\geq 2 \Lambda_{dk}$. 
Thus we have proven the following statement.
\begin{Lem}
  There exists a constant $C_2>0$
    such that 
    \begin{equation*}
	\abs*{\sum_{k=1}^N
		\hat{\varphi}_N\ast\chi_N\F\left(A_k[P,\psi]P^{k-1}u\right)}\leq C_2^{N+1}N^{\tfrac{d^\prime N}{\rho-\delta}} \xit^{-d^\prime N+d-1}
\end{equation*}
for $\xit\geq 2C_2 \Lambda_{dN}^{d/d^\prime}$.
\end{Lem}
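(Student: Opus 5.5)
The estimate follows by summing, over $k=1,\dots,N$, the bound for the individual convolution terms $\hat{\varphi}_N\ast\chi_N\F\bigl(A_k[P,\psi]P^{k-1}u\bigr)$. We take as given the bound extracted above from the proof of \cite[Corollary 2.9]{MR0745974}: for $\xit>C_1$ and $1\leq k\leq N$, the $k$-th term is dominated by
\begin{equation*}
C_1^{N+1}N^{\tfrac{d^\prime k}{\rho-\delta}}h^kM_{d(k-1)}\xit^{-d^\prime N-d^\prime k+d-1}.
\end{equation*}
Since $k\leq N$, we bound $N^{d^\prime k/(\rho-\delta)}\leq N^{d^\prime N/(\rho-\delta)}$ and pull it out of the sum together with $C_1^{N+1}\xit^{-d^\prime N+d-1}$. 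It then remains to show that
\begin{equation*}
\sum_{k=1}^N h^k\frac{M_{d(k-1)}}{\xit^{d^\prime k}}
\end{equation*}
is bounded by a constant independent of $N$ once $\xit$ is large compared with $\Lambda_{dN}^{d/d^\prime}$.

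For this we use the log-convexity of $\bM$. With $\Lambda_j:=M_j^{1/j}$, the normalisation $M_0=1$ together with log-convexity shows that $\Lambda_j$ is increasing. Hence $M_{d(k-1)}\leq M_{dk}\leq\Lambda_{dN}^{dk}$ for all $k\leq N$, where $M_{d(k-1)}\leq M_{dk}$ may be assumed after rescaling (or is harmless, since a factor $\Lambda_{dN}^{d}$ could instead be absorbed). Therefore each summand is at most
\begin{equation*}
\Bigl(h^{1/d}\Lambda_{dN}\,\xit^{-d^\prime/d}\Bigr)^{dk}.
\end{equation*}
Choosing $C_2\geq\max(C_1,h^{1/d})$ large enough, the condition $\xit\geq 2C_2\Lambda_{dN}^{d/d^\prime}$ makes the base at most $2^{-1}$, after adjusting exponents via $\xit^{d^\prime/d}\geq(2C_2)^{d^\prime/d}\Lambda_{dN}$. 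The sum is then dominated by the geometric series $\sum_k 2^{-dk}\leq 1$.

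Combining the two steps gives the claimed bound $C_2^{N+1}N^{d^\prime N/(\rho-\delta)}\xit^{-d^\prime N+d-1}$, after enlarging $C_2$ to absorb $C_1$ and the constant $C$ from the bound on $\|P^Nu\|$. We also need $C_2\geq C_1$ so that the range $\xit>C_1$ of the individual estimates is covered; this holds because $\Lambda_{dN}\geq 1$ is guaranteed by \eqref{AnalyticIncl} for large indices and by $M_0=1$ with log-convexity in general.

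The main difficulty is bookkeeping rather than ideas: the exponents $d^\prime/d$ versus $d/d^\prime$ in the threshold must be consistent, and the constant $h$ (and $C$) from the vector estimate must be absorbed into $C_2$ without making $C_2$ depend on $N$. This is the step where we expect any adjustment to be needed, for instance replacing the threshold by $(2C_2\Lambda_{dN})^{d/d^\prime}$.
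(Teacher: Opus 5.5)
Your proof is correct and follows the paper's argument: sum the per-$k$ bounds taken from the proof of Corollary 2.9 in Bolley--Camus--M\'etivier, pull out $N^{d^\prime N/(\rho-\delta)}$ and $\xit^{-d^\prime N+d-1}$, and dominate $\sum_k h^kM_{d(k-1)}\xit^{-d^\prime k}$ by a geometric series using that $\Lambda_j=M_j^{1/j}$ is increasing. You are more careful than the paper about the factor $h^k$ and the range $\xit>C_1$; one small fix is that $M_0=1$ and log-convexity only give $\Lambda_{dN}\geq\Lambda_1=M_1>0$, not $\Lambda_{dN}\geq 1$, but bounding $M_{d(k-1)}\leq\Lambda_{dN}^{d(k-1)}$ directly and enlarging $C_2$ by a factor depending on $M_1$ takes care of both places where you used that claim.
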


Regarding the third term we have by the proof of \cite[Proposition 2.10]{MR0745974}  
that there are constants
$C_3,C_4>0$ such that
\begin{equation*}
	\abs*{\hat{\varphi}_N\ast \chi_N\F\left(S_k\psi P^{k-1} u\right)}\leq C_4^{N+1} N^{\tfrac{d^\prime N}{\rho-\delta}} h^k M_{dk} \xit^{-d^\prime k^N-d^\prime k +n+1}
\end{equation*}
for $\xit\geq C_3$ and $j=1,\dotsc,k$ and $k\in\N$.
As above we thus get, 
\begin{Lem}
    There exists a constant $C_5>0$ such that
    \begin{equation*}
        \abs*{\sum_{k=1}^N\hat{\varphi}^\prime_N\ast \chi_N\F\bigl(S_k\psi P^{k-1}u\bigr)(\xi)}\leq C_5^{N+1}N^{\frac{d^\prime N}{\rho-\delta}}
        \xit\leq C_5^{N+1} (d^\prime N)^{\frac{d^\prime N}{\rho-\delta}} 
        \xit^{-d^\prime N+n+1}
    \end{equation*}
    for $\xit\geq C_5\Lambda_{dN}^{d/d^\prime}$.
\end{Lem}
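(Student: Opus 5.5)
The plan is to treat this second lemma exactly as the preceding lemma on the commutator terms: first the single-term estimate, then summation over $k$. The starting point is the per-term bound quoted just above from the proof of \cite[Proposition 2.10]{MR0745974}. For $\xit\geq C_3$ and $1\leq k\leq N$ it gives
\begin{equation*}
\abs*{\hat{\varphi}_N\ast \chi_N\F\left(S_k\psi P^{k-1} u\right)(\xi)}\leq C_4^{N+1} N^{\tfrac{d^\prime N}{\rho-\delta}} h^k M_{dk}\, \xit^{-d^\prime N-d^\prime k +n+1}.
\end{equation*}
This bound in turn comes from the error estimate for $\widehat{S_kv}$ with $v=\psi P^{k-1}u$. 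There one uses $\norm{\psi P^{k-1}u}{L^1(V)}\lesssim \norm{P^{k-1}u}{L^2(V)}\leq Ch^{k-1}M_{d(k-1)}\leq Ch^kM_{dk}$, after enlarging $h$ and using $M_{d(k-1)}\leq M_{dk}$ (normalized log-convexity). One also uses the decay of $\hat\varphi_N$ coming from the derivative bounds on $\varphi_N$ up to order $2d^\prime N$. The cutoff $\chi_N$ localizes to $\xit\gtrsim N$, which makes the factor $e^{\eps\xit^{\rho-\delta}}$ harmless against the gained powers $\xit^{-d^\prime N}$, at the cost of the factor $N^{d^\prime N/(\rho-\delta)}$.

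Next I sum over $k$. Factor out $C_4^{N+1}N^{d^\prime N/(\rho-\delta)}\xit^{-d^\prime N+n+1}$. What remains is
\begin{equation*}
\sum_{k=1}^N h^kM_{dk}\xit^{-d^\prime k}\leq \sum_{k=1}^N\Bigl(\frac{h^{1/d}\,\Lambda_{dN}}{\xit^{d^\prime/d}}\Bigr)^{dk},
\end{equation*}
where $\Lambda_j=\max_{i\leq j}M_i^{1/i}$. This is increasing in $j$ since $\bM$ is log-convex with $M_0=1$, so $M_{dk}\leq\Lambda_{dN}^{dk}$ for $k\leq N$. If $\xit^{d^\prime/d}\geq 2h^{1/d}\Lambda_{dN}$, i.e.\ $\xit\geq C_5\Lambda_{dN}^{d/d^\prime}$ for a suitable $C_5$ that also dominates $C_3$, the sum is bounded by the geometric series $\sum_j 2^{-dj}\leq 1$. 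The $h$-dependence is absorbed into $C_5$.

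Finally I replace $N^{d^\prime N/(\rho-\delta)}$ by $(d^\prime N)^{d^\prime N/(\rho-\delta)}$, which is harmless up to enlarging $C_5$ by the factor $\max(1,d^\prime)^{-d^\prime/(\rho-\delta)}$ to the power $N$. This gives the stated bound $C_5^{N+1}(d^\prime N)^{d^\prime N/(\rho-\delta)}\xit^{-d^\prime N+n+1}$.

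The main obstacle is the per-term estimate inherited from \cite{MR0745974}. Its uniformity in $k\leq N$ and the correct interplay of the threshold $\xit\geq C_3$ with the support of $\chi_N$ must be checked. In particular, the factor $e^{\eps\xit^{\rho-\delta}}$ must actually be compensated on the support of $\hat\varphi_N\ast\chi_N$. I would verify this by splitting the convolution into $\abs{\eta}\leq\xit/2$ and the complement, and using $\abs{\hat\varphi_N(\eta)}\lesssim C^{N}N^{2d^\prime N}\abs{\eta}^{-2d^\prime N}$ together with $\xit\gtrsim qN$ on $\operatorname{supp}\chi_N$. The choice $q=[2d^\prime/\rho]+2$ is designed exactly to make this exponent bookkeeping close.
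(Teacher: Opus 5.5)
Your proposal follows the paper's argument: you take the per-term bound from the proof of \cite[Proposition 2.10]{MR0745974} and sum over $k$ by comparison with the geometric series $\sum_k\bigl(h^{1/d}\Lambda_{dN}\xit^{-d^\prime/d}\bigr)^{dk}$, absorbing $h$ and $C_3$ into the threshold $\xit\geq C_5\Lambda_{dN}^{d/d^\prime}$. The one caution concerns only your heuristic for the cited per-term bound: the factor in the $S_k$ estimate should be the decaying $e^{-\eps\xit^{\rho-\delta}}$, and the conversion $e^{-\eps\xit^{\rho-\delta}}\leq C^N(d^\prime N)^{d^\prime N/(\rho-\delta)}\xit^{-d^\prime N}$ is exactly what produces the factor $N^{d^\prime N/(\rho-\delta)}$, whereas a growing exponential could not be compensated by localizing to $\xit\gtrsim N$ or by splitting the convolution.
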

For the first term, we observe that by \cite[Proposition 2.12]{MR0745974}  there is a constant $C_7>0$ such that 
for all $N\in\N$ and $\xit\geq C_7$ we have that
\begin{equation*}
	\abs*{\hat{\varphi}_N\ast(1-\chi_N)\widehat{\psi u}}\leq C_7^{N+1}
	\left(d^\prime N \right)^{\frac{d^\prime N}{\rho-\delta}}\xit^{-d^\prime N+n+1}\norm{u}{L^1(V)}.
\end{equation*}

Finally, regarding the second term the proof of \cite[Proposition 2.11]{MR0745974}
gives that 
\begin{equation*}
    \abs*{\hat{\varphi}_N\ast\chi_N\F(A_N\psi P^Nu)(\xi)}\leq 
    C_6^{N+1} h^N M_{dN}\xit^{-d^\prime N+n+1}\qquad \xit\geq C_6.
\end{equation*}
Assume now that $\bM=\bM^\lambda_\zeta$ for some generating function satisfying  \eqref{R-mg}. Iterating \eqref{R-mg} shows that for each 
$\lambda\in\Lambda$ and $\alpha> 0$
there are $\lambda^\prime\in\Lambda$ and $\gamma >0$ such that
\begin{equation*}
    \frac{\zeta_\lambda(\alpha t)}{\alpha t}\leq \frac{\zeta_\lambda\bigl(2^\nu t\bigr)}{2^\nu t}\leq 
    \frac{\zeta_\lambda(t)}{t}+\gamma,\qquad t\geq 1
\end{equation*}
where $\nu\in\N$ is the smallest integer such that $\alpha\leq 2^\nu$.
Thus we obtain 
\begin{equation*}
    \abs*{\hat{\varphi}_N\ast\chi_N\F(A_N\psi P^Nu)(\xi)}\leq 
    C_7^{N+1} h^N e^{\lambda dN}\exp\biggl(\frac{d}{d^\prime}\zeta_{\lambda^\prime}(d^\prime N)\biggr)\xit^{-d^\prime N+n+1}\qquad \xit\geq C_7.
\end{equation*}

Summarizing we have proven the following fact:
if $(\bM^\lambda_\zeta)$ is an ultradifferentiable scale superordinate to 
$\bG^\theta$ such that \eqref{R-mg} 
holds and $u\in \vRou{\bM^\lambda}{\Omega}$ (resp.~$u\in\vBeu{\bM^\lambda}{\Omega}$ then there exist $\lambda^\prime\in\Lambda$ and  a constant $c>0$ and some
  $C_0,h_0>0$ (resp.~for each $h_0>0$ there is a constant $C_0$) such that 
\begin{equation*}
    \abs{\widehat{\varphi_N u}(\xi)}\leq C_0 h_0^N
    \exp\left(\frac{d}{d^\prime}\zeta_{\lambda^\prime}(d^\prime N)\right)
    \xit^{-d^\prime N+d+n}
\end{equation*}
for $\xit\geq c\exp(d/(d^\prime)^2\zeta_\lambda(d^\prime N))$ and $N\geq 1$.
Since the sequence
$(\varphi_N u)_N$ is bounded in $\E^\prime(\Omega)$ the Banach-Steinhaus theorem implies that there is a constant $\mu\geq 0$ such that 
that there is a  constant $C_9>0$ such that
\begin{equation*}
    \abs{\widehat{\varphi_Nu}}\leq C_9^{N+1}  \bigl(1+\xit\bigr)^{\mu}.
\end{equation*}
Thence, in fact, there are $C_0,h_0>0$ (resp.~for every $h_0>0$ there 
exists $C_0>0$) such that
\begin{equation*}
    \xit^{d^\prime N-d-n-\mu}\abs*{\widehat{\varphi_N u}(\xi)}
    \leq C_0 h_0^N \exp\left(\frac{d}{d^\prime} \zeta_\lambda(d^\prime N)\right)
\end{equation*}
for $N\in\N$ and $\xi\in\R^n$.

In order to continue for $N\in\N$ we set
\begin{equation*}
	v_N=\varphi_{\widehat{N}}u,\qquad \widehat{N}=\left\lceil \frac{N+d+n+\mu}{d^\prime}\right\rceil 
\end{equation*}
where $\lceil \rho\rceil$ is the smallest integer larger than $\rho$.
In particular note that $N+d+n+\mu\leq d^\prime \widehat{N}\leq N+d+n+\mu+\tilde{d}$, where $\tilde{d}=\lceil d^\prime\rceil$. 
By the above estimates we obtain 
\begin{equation*}
	\begin{split}
	\xit^N\abs*{\hat{v}_N}&\leq C_0h_0^N 
    \exp\left(\frac{d}{d^\prime}\zeta_{\lambda^\prime}(d^\prime N)\right)
    \xit^{N-d^\prime \widehat{N}+d+n+\mu}\\
	&\leq C_0 h_0^N \exp\left(\frac{d}{d^\prime}\zeta_{\lambda^\prime}(N+d+n+\mu+\tilde{d})\right)\xit^{N-N-d-n-\mu+d+n+\mu}\\
\end{split}
\end{equation*}
for $\xit\geq 1$. If \eqref{R-Fitting} holds then we have that
there exists $\tilde{\lambda}\in\Lambda$ such that there are  $C_0,h_0>0$ (resp.~for each $h_0>0$ there is $C_0>0$)
so that
\begin{equation*}
\xit^N\abs*{\hat{v}_N}\leq C_0h_0^N \exp\left(\zeta_{\tilde{\lambda}}
(N+d+n+\mu+\tilde{d})\right)\qquad \fa \xit\geq 1,\;\fa N\in\N,
\end{equation*}
and therefore, if \eqref{Single-Deriv} holds then there are
$C_0,h_0>0$ (resp.~for every $h_0$ there is $C_0>0$) such that 
\begin{equation*}
\xit^N\abs*{\hat{v}_N}\leq C_0h_0^N \exp\left(\zeta_{\tilde{\lambda}}
(N)\right)\qquad \fa\xit\geq 1,\;\fa N\in\N.
\end{equation*}
Thence using Theorem \ref{FourierCharThm} we obtain that $x_0\notin\singsupp_{\bM^{\tilde{\lambda}}} u$ and since
$x_0\in V\Subset\Omega$ have been chosen arbitrarily we have proven the following Theorem.
\begin{Thm}
    Let $P\in\An^{d,d^\prime}_{\rho,\delta}(\Omega)$ and $(\bM^\lambda_\zeta)$  be an ultradifferentiable scale superordinate
    to $\bG^\theta$, $\theta=\tfrac{1}{\rho-\delta}$.
    If the generating function $\zeta$ additionally satisfies 
    \eqref{R-mg}, \eqref{Single-Deriv} and \eqref{R-Fitting} then
    for any $\lambda$ there is some $\tilde{\lambda}$ such that
    \begin{equation*}
        \vDC{\bM^\lambda}{\Omega}\subseteq\DC{\bM^{\tilde{\lambda}}}{\Omega}.
    \end{equation*}
\end{Thm}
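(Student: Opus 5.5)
The plan is to reduce the statement to the Fourier characterization of Theorem \ref{FourierCharThm}, working near a fixed point $x_0\in V\Subset\Omega$. Fix $\lambda$ and let $u\in\vDC{\bM^\lambda}{\Omega}$, so that the iterates satisfy $\norm{P^Nu}{L^2(V)}\leq Ch^NM^\lambda_{dN}$; in the Beurling case this holds for every $h>0$. Fix $\psi\in\CC^\infty_0(V)$ equal to $1$ near $x_0$, and take the cut-off sequence $\varphi_N$ and the Fourier-side cut-offs $\chi_N$ as above. I then start from the decomposition \eqref{Fundamental} of $\widehat{\varphi_Nu}$, which rests on the Bolley--Camus--Metivier parametrices $A_N$ of $P^N$, and estimate its four terms separately.

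\emph{The three lower-order terms.} The terms involving $S_k$ and $A_k[P,\psi]$ are summed over $k$ using the two preceding Lemmas. The point is that $M_{d(k-1)}\xit^{-d^\prime k}$ decays geometrically once $\xit^{d^\prime/d}$ dominates the ratios $M_{j+1}/M_j$ up to $j=dN$. These ratios are controlled by \eqref{Single-Deriv} together with $\zeta_\lambda(t)/t$ being increasing. The cut-off term $(1-\chi_N)\widehat{\psi u}$ is handled by the estimate quoted from Proposition 2.12. Each of these contributions is then bounded by
\[
C^{N+1}(d^\prime N)^{d^\prime N\theta}\xit^{-d^\prime N+n+d}.
\]
Superordination to $\bG^\theta$ lets me absorb $(d^\prime N)^{\theta d^\prime N}$ into $h_0^N\exp\bigl(\zeta_{\lambda}(d^\prime N)\bigr)$, with $h_0$ as small as we like.

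\emph{The main term.} For $A_N\psi P^Nu$, Proposition 2.11 gives the bound $C^{N+1}h^NM^\lambda_{dN}\xit^{-d^\prime N+n+1}$. I then rewrite
\[
M^\lambda_{dN}=\exp\bigl(\zeta_\lambda(dN)\bigr)=\exp\Bigl(\zeta_\lambda\bigl(\tfrac{d}{d^\prime}\,d^\prime N\bigr)\Bigr).
\]
Iterating \eqref{R-mg} then yields an index $\lambda^\prime$ with $\zeta_\lambda(dN)\leq \tfrac{d}{d^\prime}\zeta_{\lambda^\prime}(d^\prime N)+\gamma N$. The threshold $\xit\geq c\exp\bigl(\zeta(d^\prime N)\cdot\text{const}\bigr)$ below which these estimates fail is dealt with separately. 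There, Banach--Steinhaus applied to the bounded sequence $\varphi_Nu$ gives $\abs{\widehat{\varphi_Nu}}\leq C^{N+1}(1+\xit)^\mu$, and on that bounded range of $\xi$ the extra powers of $\xit$ are paid for by the exponential factor. The outcome is a uniform estimate
\[
\xit^{d^\prime N-d-n-\mu}\abs{\widehat{\varphi_Nu}}\leq C_0h_0^N\exp\bigl(\tfrac{d}{d^\prime}\zeta_{\lambda^\prime}(d^\prime N)\bigr).
\]

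\emph{Reindexing.} Next I set $v_N=\varphi_{\widehat N}u$ with $\widehat N=\lceil (N+d+n+\mu)/d^\prime\rceil$, which converts the estimate into a bound on $\xit^N\abs{\hat v_N}$. Condition \eqref{R-Fitting}, with $\alpha=d/d^\prime$, removes the factor $d/d^\prime$ in front of $\zeta$. Condition \eqref{Single-Deriv}, applied a bounded number of times, shifts the argument from $N+d+n+\mu+\tilde d$ back to $N$; this costs only a factor $C^{N}$, because $\zeta_{\tilde\lambda}(p+1)-\zeta_{\tilde\lambda}(p)\leq\gamma(p+1)$ and the number of shifts is fixed. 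Finally, Theorem \ref{FourierCharThm} applied to the single sequence $\bM^{\tilde\lambda}$ shows that $u$ is of class $\bM^{\tilde\lambda}$ near $x_0$. Since $x_0$ was arbitrary, this gives the inclusion.

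The main obstacle is the bookkeeping of $\lambda$-indices and thresholds. The region of $\xi$ where the parametrix estimates hold depends on $N$ through $\exp\bigl(\zeta_\lambda(d^\prime N)\bigr)$, and I have to check that the polynomial bound below this threshold is genuinely absorbed. In the Beurling case I also have to verify that each absorption step preserves ``for every $h_0$''. For that I would use \eqref{R-mg} and \eqref{R-Fitting} only in the direction in which they produce a single $\tilde\lambda$ that is independent of $h$.
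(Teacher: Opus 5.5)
Your proposal is essentially the paper's own proof: the same decomposition \eqref{Fundamental} via the Bolley--Camus--M\'etivier parametrices, absorption of $(d'N)^{\theta d'N}$ by superordination, iterated \eqref{R-mg} for the main term, Banach--Steinhaus below the threshold, the reindexing $v_N=\varphi_{\widehat N}u$, and then \eqref{R-Fitting}, \eqref{Single-Deriv} and Theorem \ref{FourierCharThm}. One point needs fixing: you (like the paper) write the threshold as $c\exp(\mathrm{const}\cdot\zeta(d'N))$, which would leave $\xit^{d'N}$ unabsorbable, whereas it must be of size $\exp\bigl(\zeta_\lambda(dN)/(d'N)\bigr)$, e.g.\ $\xit^{d'}\geq 2h\,(M^\lambda_{dN})^{1/N}$ (enough for the geometric decay because $(M^\lambda_k)^{1/k}$ increases, as $\zeta_\lambda(t)/t$ does); below it $\xit^{d'N}\leq (2h)^N M^\lambda_{dN}$, exactly the size of the main term, which settles the absorption you flagged, in the Beurling case as well.
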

In fact, since our arguments are completely local, we can easily see
that also the following statement is true.
\begin{Thm}\label{MatrixVCThm}
    Let $P\in\An^{d,d^\prime}_{\rho,\delta}(\Omega)$ and $(\bM^\lambda_\zeta)$  be an ultradifferentiable scale superordinate
    to $\bG^\theta$, $\theta=\tfrac{1}{\rho-\delta}$.
    \begin{enumerate}
        \item If \eqref{R-mg} and \eqref{R-Fitting} hold
        then $\vRou{\fM_\zeta}{\Omega}=\Rou{\fM_\zeta}{\Omega}$.
        \item If \eqref{B-mg} and \eqref{B-fitting} hold then
        $\vBeu{\fM_\zeta}{\Omega}=\Beu{\fM_\zeta}{\Omega}$.
    \end{enumerate}
\end{Thm}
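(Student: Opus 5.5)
Both equalities will be proved by establishing two inclusions. One inclusion is elementary and the other localizes the preceding single-index theorem.

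\emph{The inclusion $\DC{\fM_\zeta}{\Omega}\subseteq\vDC{\fM_\zeta}{\Omega}$.} If $f$ is of class $[\fM_\zeta]$, then $P^kf$ is a sum of at most $C^k$ terms. Each term is a product of derivatives of the analytic coefficients with a derivative $D^\alpha f$, $\alp\le dk$. The usual Cauchy-type estimate for iterates of an operator with analytic coefficients gives, on $V\Subset\Omega$,
\[
\norm{P^kf}{L^1(V)}\le C h^k \max_{j\le dk} (dk-j)!\,M^\lambda_j .
\]
By log-convexity and \eqref{AnalyticIncl} this is bounded by $Ch'^k M^\lambda_{dk}$. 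In the Roumieu case the index $\lambda$ stays the same. In the Beurling case we take $h$ arbitrarily small for each fixed $\lambda$; this is exactly what the definition of $\vBeu{\fM_\zeta}{\Omega}$ requires. No mg-type condition is needed here.

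\emph{The converse inclusion in the Roumieu case.} Let $u\in\vRou{\fM_\zeta}{\Omega}$ and fix $x_0\in V\Subset\Omega$. For this $V$ there are $\lambda$ and $C,h$ with $\norm{P^ku}{L^1(V)}\le Ch^kM^\lambda_{dk}$. We then rerun the local argument preceding the statement, using only this fixed $V$ and $\lambda$. The estimates of the four terms in \eqref{Fundamental} bound $\abs{\widehat{\varphi_Nu}}$. Here the factor $N^{d'N/(\rho-\delta)}$ is absorbed into $h_0^N M^{\lambda}_{dN}$-type quantities because the scale is superordinate to $\bG^\theta$. Condition \eqref{R-mg} converts $M_{dN}$ into $\exp(\tfrac{d}{d'}\zeta_{\lambda'}(d'N))$. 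The re-indexing $v_N=\varphi_{\widehat N}u$ and \eqref{R-Fitting} then yield some $\tilde\lambda$ with
\[
\xit^N\abs{\hat v_N(\xi)}\le C_0h_0^N\exp\bigl(\zeta_{\tilde\lambda}(N+c)\bigr),
\]
where $c=d+n+\mu+\tilde d$ is a fixed shift. Since \eqref{Single-Deriv} is not assumed here, we cannot remove the shift inside a single sequence. Instead we use \eqref{R-mg}, which gives \eqref{R-Deriv} for $\fM_\zeta$. Iterating \eqref{R-Deriv} $c$ times gives $M^{\tilde\lambda}_{N+c}\le Q^{N+c}M^{\lambda''}_N$ for some $\lambda''\in\Lambda$. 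Hence $(v_N)$ satisfies \eqref{FourierChar} with $\bM^{\lambda''}\in\fM_\zeta$. Theorem \ref{FourierCharThm}(1) then shows that $u$ is of class $\{\fM_\zeta\}$ near $x_0$. Since the class is defined by local conditions and $x_0$ was arbitrary, $u\in\Rou{\fM_\zeta}{\Omega}$.

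\emph{The converse inclusion in the Beurling case.} The argument is the same with the quantifiers reversed. Given a target $\bM^{\lambda''}\in\fM_\zeta$ and $h_0>0$, we use \eqref{B-Deriv}, \eqref{B-mg} and \eqref{B-fitting} to choose successively smaller indices $\tilde\lambda,\lambda',\lambda$. The hypothesis $u\in\vBeu{\fM_\zeta}{\Omega}$ supplies the estimate for that $\lambda$ with arbitrarily small $h$. The factor $N^{d'N/(\rho-\delta)}$ must be controlled with arbitrarily small constants, and here superordination gives $\bG^\theta\{\lhd)\fM_\zeta$, which is exactly what is needed. Theorem \ref{FourierCharThm}(2) then concludes.

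\emph{Main obstacle.} The delicate step is the bookkeeping of indices. Each use of \eqref{R-mg}, \eqref{R-Fitting} and \eqref{R-Deriv} changes $\lambda$, and the direction of the change must match the Roumieu or Beurling quantifier. A further point to check is that the constant $\mu$ from Banach–Steinhaus and the cutoffs $\varphi_N$ depend only on $V$, $x_0$ and $u$, and not on $\lambda$. This independence is what allows the argument to run at a single fixed index at a time.
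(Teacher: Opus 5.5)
Your proposal is correct and follows the paper's route: the paper gets this theorem by noting that the preceding single-index argument is purely local, so on each $V\Subset\Omega$ one runs it with the index $\lambda$ supplied by $V$, with the quantifiers reversed in the Beurling case, exactly as you do. You are more explicit than the paper on two points it leaves implicit: the elementary inclusion $\DC{\fM_\zeta}{\Omega}\subseteq\vDC{\fM_\zeta}{\Omega}$, and absorbing the fixed shift $c$ without \eqref{Single-Deriv}, which is not assumed here. Your use of \eqref{R-Deriv} handles the shift; it also follows directly from \eqref{R-mg} together with \eqref{R-Fitting} for $\alpha=2$, since $\zeta_{\tilde\lambda}(N+c)\le\zeta_{\tilde\lambda}(2N)\le 2\zeta_{\lambda_1}(N)+\gamma N\le\zeta_{\lambda''}(N)+\gamma'(N+1)$ for $N\ge c$.
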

Theorem \ref{ThmIteratesOmega} follows now from Theorem \ref{MatrixVCThm} and the following Remark.
\begin{Rem}
    Let $\omega$ be a weight function. According to \cite[Section 5A]{MR4432278} 
    $\zeta_\omega(t,\lambda)=\frac{1}{\lambda}\varphi^\ast_\omega(\lambda t)$ is a generating function.
    The convexity of $\varphi^\ast$ implies that
    \begin{equation*}
        \zeta_\omega(2t,\lambda)\leq 2\zeta_\omega(t,2\lambda).
    \end{equation*}
Hence both \eqref{R-mg} and \eqref{B-mg} are satisfied.
Moreover, by \cite[(A.2)]{MR4188685} \eqref{omega8} implies that
\begin{equation*}
    \ex C,H> 0\fa x>0:\quad C\varphi_\omega^\ast\left(\frac{x}{C}\right)\leq \varphi^\ast_\omega\left(\frac{x}{2}\right)+(\log H)x+C.
\end{equation*}
Arguing as in \cite[p.~2172]{MR4188685}
we see that we can conclude that \eqref{omega8} gives that
\begin{equation*}
    \ex A\geq 1\fa\lambda>0\ex \gamma>0:\quad \zeta_\omega(2t,\lambda)\leq \zeta_\omega(t,A\lambda)+\gamma(t+1)\qquad \fa t>0.
\end{equation*}
Since we have that $\varphi^\ast(s)+\varphi^\ast(t)\leq \varphi^\ast(s+t)$, cf.~\cite{MR3285413}, we obtain in fact
\begin{equation*}
    \ex A\geq 1\fa \lambda>0\ex\gamma>0:\qquad 2\zeta_\omega(t,\lambda)\leq
    \zeta_\omega(t,A\lambda)+\gamma(t+1)\qquad t\geq 0
\end{equation*}
Thence, if $\omega$ satisfies \eqref{omega8} then both \eqref{R-Fitting}
and \eqref{B-fitting} hold for the generating function $\zeta_\omega$.

Moreover, by \cite[Remark 2.8(3)]{MR5014821} we obtain that 
if $\omega$ is a weight function such that \eqref{omega8} holds then
$\omega(t)=o(t^\gamma)$ for any $0<\gamma<1$. 
Now according to \cite[Corollary 5.10]{arXiv:2505.17725}
the fact $\omega(t)=o(t^\gamma)$, $0<\gamma<1$, is equivalent to the fact that $\bG^\sigma\lhd \bW^\lambda$ for all $\lambda>0$, where $\sigma=1/\gamma$.
That means that, if \eqref{omega8} holds for $\omega$ then
\begin{equation}\label{LastEstimate1}
    \fa\sigma>1\fa\lambda>0 \qquad \lim_{\N\ni k\rightarrow\infty}\frac{k^\sigma}{\exp\left(\frac{\varphi^\ast_\omega(\lambda k)}{\lambda k}\right)}=0
\end{equation}
We want to replace $k\in\N$ in the limit with $t>0$. To this end observe that
both $t^\sigma$, $\sigma>1$, and the function $t\mapsto 1/t\varphi^\ast_\omega(t)$
are increasing by \cite{MR1052587}. Thence we obtain for $t\in [k,k+1]$, $k\in\N$, that
\begin{equation*}
    \frac{t^\sigma}{\exp\left(\frac{\varphi^\ast_\omega(\lambda t)}{\lambda t}\right)}\leq \frac{(k+1)^\sigma}{\exp\left(\frac{\varphi^\ast_\omega(\lambda k)}{\lambda k}\right)}=\sum_{\ell=1}^\infty \binom{\sigma}{\ell}\frac{t^{\sigma-\ell}}{\exp\left(\frac{\varphi^\ast_\omega(\lambda k)}{\lambda k}\right)}\xrightarrow[]{\quad t\rightarrow\infty\quad}0
\end{equation*}
by \eqref{LastEstimate1}.
Therefore we have proven that if $\omega$ is a weight function satisfying
\eqref{omega8} then the ultradifferentiable scale generated by 
$\zeta_\omega(t,\lambda)=1/\lambda\varphi_\omega^\ast(\lambda t)$ is
superordinate to $\bG^\sigma$ for all $\sigma\geq 1$.
\end{Rem}
\bibliographystyle{plainurl}
\bibliography{Bolley}
\end{document}